\newcommand{\fr}{\frac}
\newcommand{\la}{\langle}
\newcommand{\ra}{\rangle}
\newcommand{\eps}{\varepsilon}
\newcommand{\be}{\begin{equation}}
\newcommand{\ba}{\begin{aligned}}
\newcommand{\bee}{\begin{equation*}}
\newcommand{\ee}{\end{equation}}
\newcommand{\ea}{\end{aligned}}
\newcommand{\eee}{\end{equation*}}
\newcommand{\bea}{\begin{equation} \begin{aligned} }
\newcommand{\eea}{\end{aligned}\end{equation} }
\newcommand{\abs}[1]{\lvert#1\rvert}
\newcommand{\norm}[1]{\lvert\lvert#1\rvert\rvert}
\newcommand{\Lap}{\Delta}
\theoremstyle{plain}
\newtheorem{theorem}{Theorem}[section]
\newtheorem{corollary}[theorem]{Corollary}
\newtheorem{proposition}[theorem]{Proposition}
\newtheorem{claim}{Claim}[section]
\theoremstyle{remark}
\theoremstyle{definition}
\numberwithin{equation}{section}
\begin{document}
\title{Heat flow on time-dependent manifolds}
\author{Beomjun Choi, Jianhui Gao, Robert Haslhofer, Daniel Sigal}

\begin{abstract} We establish effective existence and uniqueness for the heat flow on time-dependent Riemannian manifolds, under minimal assumptions tailored towards the study of Ricci flow through singularities. The main point is that our estimates only depend on an upper bound for the logarithmic derivative of the volume measure. In particular, our estimates hold for any Ricci flow with scalar curvature bounded below, and such a lower bound of course depends only on the initial data. \end{abstract}
\maketitle

\section{Introduction}

Heat flow plays a central role in analysis, geometry and probability. The theory is of course very classical if the underlying space is Euclidean space $\mathbb{R}^n$ or a closed Riemannian manifold $(M,g)$, but things become much more involved once the space becomes more complicated.\\

In a highly influential paper \cite{AGS1}, Ambrosio-Gigli-Savare developed a deep theory of heat flow in the general setting of metric measure spaces $(M,d,m)$. This is on the one hand of great interest in itself, and on the other hand also provides a fundamental tool for the study of metric measure spaces with Ricci curvature bounded below, see e.g. \cite{AGS2,AGS3,EKS,DePhilippisGigli,MondinoNaber,BrueSemola}. Related to this, there is the abstract theory of Dirichlet-forms, see e.g. \cite{Fukushima}, which also allows to establish existence of heat flow in very general situations, including in particular certain fractals and infinite dimensional spaces.\\

Another much desired generalization is one to the setting of time-dependent spaces. Generally speaking, this is because for many processes the diffusion does not take place on a static space, but rather on a space that itself evolves in time. In terms of applications to geometry, a primary motivation comes from Hamilton's Ricci flow \cite{Ham82,Ham_survey}. There are three recent proposals for a notion of Ricci flow through singularities introduced by Kleiner-Lott \cite{KL1}, Haslhofer-Naber \cite{HaslhoferNaber} and Sturm \cite{Sturm_superRicci}, and a general enough theory of heat flow on time-dependent spaces would be fundamental for the analysis of such Ricci flows through singularities.\\

In an important recent paper \cite{KopferSturm}, Kopfer-Sturm established existence, uniqueness and regularity of heat flow on certain time-dependent metric measure spaces $(M,d_t,m_t)$. Their approach is based on the theory of time-dependent Dirichlet-forms. This is on the one hand very general in the sense that it allows for highly singular spaces, but on the other hand also very restrictive in terms of how the spaces are actually allowed to change in time. Namely, the authors assume throughout their whole paper that their time-dependent metric measure spaces satisfy
\begin{equation}\label{ass_metric}
\left| \log \frac{d_{t_2}(x,y)}{d_{t_1}(x,y)}\right| \leq C_1|t_2-t_1| \qquad\qquad \textrm{(assumption on metrics in \cite{KopferSturm})}
\end{equation}
and
\begin{equation}\label{ass_measure}
\left| \log \frac{dm_{t_2}}{dm_{t_1}}\right| \leq C_2|t_2-t_1| \qquad\qquad \textrm{(assumption on measures in \cite{KopferSturm}).}
\end{equation}
In particular, for a smooth Riemannian manifold evolving by Ricci flow assumption \eqref{ass_metric} is equivalent to the assumption that the Ricci curvature is bounded. It is known since the work of Sesum \cite{Sesum}, that the Ricci curvature always blows up at singularities. Hence, unfortunately the assumption \eqref{ass_metric} is so restrictive that it cannot describe the flow through \emph{any} singularity.\\

Motivated by the above, we investigate the problem of existence and uniqueness for the heat flow on time-dependent spaces, under minimal assumptions tailored towards the study of Ricci flow through singularities. In the present paper, to capture the main ideas in the simplest possible setting, we focus on smooth one-parameter families of closed Riemannian manifolds. We will address the problem in the more general metric measure space setting in subsequent papers.\\

Let $(M,g_t)_{t\in [0,T]}$ be a smooth one-parameter family of closed Riemannian manifolds. Consider the linear heat equation on our evolving family of Riemannian manifolds,
\begin{equation}\label{heat_eq_evol}
\partial_t u=\Delta_{g_t} u.
\end{equation}
By smoothness and compactness, it is of course well-known that given any reasonable initial condition, say $u|_{t=0}=u_0\in L^2(M,g_0)$, there exists a unique solution. This can be shown in many different ways, e.g. via Galerkin approximation, time-discretization, elliptic regularization, or the theory of time-dependent Dirichlet-forms. However, all the existence proofs in the literature depend on several bounds for $(M,g_t)_{t\in [0,T]}$. The best result seems to be the one from \cite{KopferSturm}, which depends on a two-sided bound for the time-derivative of the Riemannian metrics $g_t$ and a two-sided bound for the logarithmic derivative of the volume measure $dm_t=d\textrm{vol}_{g_t}$. Some other proofs even depend on a bound for the full Riemann tensor and a lower bound for the injectivity radius.\\

In this paper, we give an effective proof that only depends on an upper bound for the logarithmic derivative of the volume measure, namely our estimates only depend on the constant $C_0<\infty$ in
\begin{equation}\label{assumption}
dm_{t_2}\leq e^{C_0(t_2-t_1)}dm_{t_1}\qquad \textrm{for } t_2\geq t_1.
\end{equation}
The meaning of \eqref{assumption} is that volume does not increase too much going forward in time. This is of course a perfectly natural assumption in the context of Ricci flow. Indeed, under Ricci flow the volume measure evolves by $\partial_t dm_t = -R dm_t$, where $R$ is the scalar curvature. Hence, one gets \eqref{assumption} provided $R\geq -C_0$, and indeed such a lower bound for the scalar curvature only depends on the initial data, since its minimum is nondecreasing by the evolution equation $\partial_t R = \Lap R + 2|\textrm{Rc}|^2$.\\

We use the implicit Euler scheme. However, due to the time-dependent metrics we have to be more careful how we fill in the intermediate times. Let $h=T/N$ be the time step. Given the initial condition $u_0$, we recursively define $u_k$ as the unique minimizer of the convex functional
\begin{equation}
u\mapsto \int_{M}\left(\abs{\nabla u}^2_{g_{kh}}+\frac{1}{h}(u- u_{k-1})^2\right)dm_{kh}.
\end{equation}

\bigskip

Usually in the literature one defines $u^h(t)$ at intermediate times via DeGiorgi interpolation \cite{DeGiorgi,AGS_book}. Specifically, one sets $u^h(kh)=u_k$, and then for $t=(k-1)h+\delta$, where $\delta\in(0,h)$, lets $u^h(t)$ be the unique minimizer of the functional
\begin{equation}
u\mapsto \int_{M}\left(\abs{\nabla u}^2_{g_t}+\frac{1}{\delta}\left(u- u_{k-1}\right)^2\right)dm_t.
\end{equation}
However, in our time-dependent setting this would not yield any proper $L^2H^{1}$-control. This is because $\abs{\nabla u}^2_{g_t}=g_t^{ij}\partial_i u \partial_j u$ depends on $g_t$, and the metrics at different times could be very different.\\

To get around this issue, we instead fill in the intermediate times using always steps of size $h$. To this end, we first extend the metric and initial condition to negative times by setting $g_t=g_0$ and $u^h(t) = u_0$ for $t<0$. We then define $u^h(t)$ recursively in time as unique minimizer of the convex functional
\begin{equation}
u\mapsto \int_{M}\left({\vert \nabla u\vert}^2_{g_t}+\frac{1}{h}(u- u^{h}(t-h))^2\right)dm_{t}.
\end{equation}
Namely, this first defines $u^h(t)$ for $t\in (0,h]$, then for $t\in[h,2h]$, etc.
Note in particular that this interpolates between the discrete solutions $u_k$ from above, i.e. it holds that $u^h(kh)=u_k$.

\begin{theorem}[uniform estimates and effective existence]\label{thm_main} Let $(M,g_t)_{t\in [0,T]}$ be a smooth one-parameter family of closed Riemannian manifolds. Then the functions $u^{h}(t)$ constructed via the above approximation scheme starting at $u_0\in L^2(M,g_0)$ satisfy the uniform energy estimate
\begin{equation}\label{main_energy_est}
\sup_{t\in[0,T]}\int_M u^h(t)^2 dm_t+ \int_{0}^T \!\! \int_{M} \vert \nabla u^h(t) \vert_{g_t}^2 \, dm_t\,dt \le e^{C_0T} \int_{M}u_0^2 \, dm_0,
\end{equation}
which only depends on $C_0<\infty$ such that $dm_{t_2}\leq e^{C_0(t_2-t_1)}dm_{t_1}$ for  $t_2\geq t_1$. 
Moreover, if $u_0$ is essentially bounded above respectively below, then we also have the estimates
\begin{equation}
\sup u^h(t) \leq \sup u_0\quad \textrm{ and }\quad \inf u^h(t)\geq \inf u_0.
\end{equation}
Finally, given $h_i\to 0$, after passing to a subsequence the functions $u^{h_i}(t)$ converge weakly in $L^2 H_0^1$ and weak-$\ast$ in $L^\infty L^2$ to a solution $u$ of the heat equation $\partial_t u = \Lap_{g_t} u$ with initial condition $u_0$.
\end{theorem}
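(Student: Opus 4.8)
The plan is to treat the three assertions in turn; the workhorse throughout is the Euler--Lagrange equation of the strictly convex functional defining $u^h(t)$, which states that for every $t>0$ and every $\psi\in H^1(M)$
\begin{equation}\label{euler_lagrange}
\int_M\langle\nabla u^h(t),\nabla\psi\rangle_{g_t}\,dm_t+\frac1h\int_M\bigl(u^h(t)-u^h(t-h)\bigr)\psi\,dm_t=0.
\end{equation}
For the energy estimate I would take $\psi=u^h(t)$ in \eqref{euler_lagrange} and use $2ab\le a^2+b^2$ on the cross term to obtain the one-step inequality
\begin{equation}\label{one_step}
\int_M u^h(t)^2\,dm_t+2h\int_M|\nabla u^h(t)|^2_{g_t}\,dm_t\le\int_M u^h(t-h)^2\,dm_t\le e^{C_0h}\int_M u^h(t-h)^2\,dm_{t-h},
\end{equation}
the last step being exactly the hypothesis \eqref{assumption} applied on $[t-h,t]$. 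This is the one place where the interpolation scheme is essential: because we fill in intermediate times by genuine steps of size $h$ with the gradient measured at $g_t$, the competitor $u^h(t-h)$ produces $\int_M u^h(t-h)^2\,dm_t$, i.e.\ the $L^2$-norm against the measure at the \emph{later} time $t$, which is precisely the quantity \eqref{assumption} controls; DeGiorgi interpolation would not have this feature. Iterating \eqref{one_step} along the chain $t,t-h,t-2h,\dots$ down into the region $t\le 0$, where $u^h\equiv u_0$ and $dm_t\equiv dm_0$, and keeping all the intermediate gradient terms, the volume factors multiply out to $e^{C_0T}$ and one reads off both parts of \eqref{main_energy_est}, the two contributions being estimated jointly (for the $L^2H_0^1$ term one also integrates \eqref{one_step} in $t$).

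For the two-sided bounds I would induct over the slabs $((k-1)h,kh]$ with a truncation competitor. Suppose $u^h(t-h)\le S:=\sup u_0$; this holds for $t-h\le 0$ since there $u^h(t-h)=u_0$, which is the base case. Feeding $\min(u^h(t),S)$ into the functional does not increase $|\nabla\,\cdot\,|_{g_t}$ pointwise, and on $\{u^h(t)>S\}$ one has $u^h(t-h)\le S<u^h(t)$, so replacing $u^h(t)$ by $S$ there strictly lowers $(u^h(t)-u^h(t-h))^2$; hence unless $\{u^h(t)>S\}$ is null the functional strictly decreases, contradicting uniqueness of the minimizer. Thus $u^h(t)\le S$, and the estimate propagates to the next slab; the lower bound is symmetric.

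For convergence, estimate \eqref{main_energy_est} together with the (qualitative, $g_t$-dependent) two-sided comparability of the smooth densities $dm_t/dm_0$ on the compact family $(M,g_t)_{t\in[0,T]}$ shows that $\{u^{h_i}\}$ is bounded in $L^2([0,T];H_0^1(M))$ and in $L^\infty([0,T];L^2(M))$; after a subsequence it converges weakly, resp.\ weak-$\ast$, to a common limit $u$, and the $L^2$-weak limit of the bounded gradients $\nabla u^{h_i}$ is necessarily $\nabla u$ since the gradient is a closed operator. Substituting $\psi=\phi(t,\cdot)$ for $\phi\in C_c^\infty((0,T)\times M)$ into \eqref{euler_lagrange} and integrating in $t$, the gradient term passes to the limit by weak convergence, while, after the shift $s=t-h$ (the boundary contribution vanishing once $h$ is small), the discrete time-difference term equals
\begin{equation}\label{time_diff}
-\int_0^T\!\!\int_M u^{h_i}(t)\,\frac{\phi(t+h,\cdot)\,dm_{t+h}-\phi(t,\cdot)\,dm_t}{h}\,dt
\end{equation}
and by smoothness of $g_t$ and $dm_t$ the difference quotient converges uniformly to $\partial_t(\phi\,dm_t)$, so pairing it with the weakly convergent $u^{h_i}$ and integrating by parts back in $t$ yields exactly the weak formulation of $\partial_t u=\Delta_{g_t}u$, the term generated by $\partial_t dm_t$ being precisely what that integration by parts requires. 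For the initial condition, the same computation with $\phi(T,\cdot)=0$ but $\phi(0,\cdot)$ free leaves a boundary term at $t=0$ which, since $u^{h_i}\equiv u_0$ there, carries $u_0$; hence $u(0)=u_0$ weakly, and since \eqref{main_energy_est} forces $\limsup_{t\to 0}\int_M u(t)^2\,dm_t\le\int_M u_0^2\,dm_0$, weak convergence of $u(t)$ to $u_0$ upgrades to convergence in $L^2$.

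The main obstacle is the energy estimate with the correct dependence: everything rests on the observation that testing \eqref{euler_lagrange} with $u^h(t)$ itself produces $\int_M u^h(t-h)^2\,dm_t$, the norm taken against the measure at the later time, which is exactly the quantity bounded by \eqref{assumption} — and this is available only because of the $h$-step interpolation. Granting \eqref{main_energy_est}, the truncation argument and the weak-compactness/passage-to-the-limit are routine, the only genuine care being the $t$-dependence of $g_t$ and $dm_t$ in the limiting identity.
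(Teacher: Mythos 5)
Your argument is correct and the core mechanism — testing the Euler--Lagrange identity with $u^h(t)$ itself, completing the square so that $\int_M u^h(t-h)^2\,dm_t$ appears and can be compared to $\int_M u^h(t-h)^2\,dm_{t-h}$ via the assumption on the volume measure, then iterating along each residue class $t \bmod h$ down into $t\le 0$ where $u^h\equiv u_0$ — is exactly the paper's. The Markovian-truncation argument for the $L^\infty$ bounds is likewise the same.

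Where you diverge is in the passage to the limit, and your route is genuinely more elementary, at the cost of exploiting the smooth-compact setting in a way the paper deliberately avoids. The paper writes the measure difference quotient as $R^h\,dm_t$, proves a uniform-integrability claim for $R^h$ that depends only on $C_0$, and invokes Egorov to get strong $L^1L^1$ convergence of $R^h\to R$; it also pairs this with $u^{h_i}\phi$ in $L^\infty L^\infty$, which is why it first restricts to $u_0\in L^\infty$ and then reaches general $u_0\in L^2$ by a separate truncate-and-double-limit step. You instead note that on a compact $M$ with a smooth family $(g_t)$ the difference quotient $\tfrac{1}{h}\big(\phi(t+h)\,dm_{t+h}-\phi(t)\,dm_t\big)$ converges \emph{uniformly}, hence strongly in $L^1L^2$, and pair it against the weak-$\ast$ $L^\infty L^2$ limit of $u^{h_i}$; this is correct and lets you avoid both the Egorov argument and the truncation step. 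The trade-off: your shortcut uses a qualitative consequence of smoothness (a pointwise bound on $\partial_t dm_t$) that is not controlled by $C_0$ alone, whereas the paper's Egorov route only uses $R\in L^1(dm_t\,dt)$ and is therefore the argument that will survive in the non-smooth metric-measure generalization the authors are aiming for. Since the theorem's quantitative claim concerns only the a priori estimates (which you obtain with the right dependence), while the convergence claim is purely qualitative, your simplification is legitimate here. One small remark: your closing step for $u(0)=u_0$ (weak attainment upgraded by $\limsup_{t\to 0}\int u(t)^2\,dm_t\le\int u_0^2\,dm_0$) is a valid alternative to the paper's mollification proof of $L^2$-continuity; it establishes the initial condition but gives slightly less (continuity at $t=0$ only, not on all of $[0,T]$), which is all the statement requires.
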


The main point is that our energy estimate \eqref{main_energy_est} only depends on the constant $C_0$ capturing the volume increase in \eqref{assumption}, but does not depend on any other bounds for our family of manifolds. In particular, if $(M,g_t)_{t\in [0,T]}$  evolves by Ricci flow we can simply choose $C_0:=-\min\{\min R_{g_0},0\}$.\\

Furthermore, since our approximation scheme is linear, Theorem \ref{thm_main} immediately implies the following contraction estimate, which in particular gives effective uniqueness:
\begin{corollary}[contraction estimate and effective uniqueness]\label{cor_main}
Let $u^{h}(t)$ and $v^{h}(t)$ be the functions constructed via the above approximation scheme with initial condition $u_0$ and $v_0$, respectively. Then, our approximation scheme with initial condition $\lambda u_0+\mu v_0$ produces the function $\lambda u^{h}(t)+\mu v^{h}(t)$, and we have the uniform estimate
\begin{equation}\label{contr_est}
\sup_{t\in[0,T]}\int_M (u^h(t)-v^h(t))^2 dm_t+ \int_{0}^T \!\! \int_{M} \vert \nabla (u^h(t)-v^h(t)) \vert_{g_t}^2 \, dm_t\,dt \le e^{C_0T} \int_{M}(u_0-v_0)^2 \, dm_0.
\end{equation}
In particular, the subsequential convergence in Theorem \ref{thm_main} entails full convergence, and the unique solutions $u$ and $v$ of the heat equation with initial condition $u_0$ and $v_0$, respectively,  satisfy
\begin{equation}\label{contr_est_sol}
\int_M (u(t)-v(t))^2 dm_t\le e^{C_0t} \int_{M}(u_0-v_0)^2 \, dm_0.
\end{equation}
\end{corollary}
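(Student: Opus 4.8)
The plan is to establish the three assertions in turn — linearity of the scheme, the discrete contraction estimate, and then the passage to the limiting solutions together with uniqueness — outsourcing all quantitative work to Theorem \ref{thm_main}.

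First I would prove linearity. For each $t$ the function $u^h(t)$ is the unique minimizer of the strictly convex functional $u\mapsto\int_M(\vert\nabla u\vert^2_{g_t}+\tfrac1h(u-u^h(t-h))^2)\,dm_t$, hence is characterized by its linear Euler--Lagrange equation
\begin{equation}
\int_M\Big(\la\nabla u^h(t),\nabla\varphi\ra_{g_t}+\tfrac1h\big(u^h(t)-u^h(t-h)\big)\varphi\Big)\,dm_t=0\qquad\textrm{for all }\varphi\in H^1(M,g_t),
\end{equation}
i.e. $u^h(t)=(\mathrm{Id}-h\Delta_{g_t})^{-1}u^h(t-h)$ in the weak sense, with $(\mathrm{Id}-h\Delta_{g_t})^{-1}$ a bounded linear operator on $L^2(M,g_t)$ by Lax--Milgram. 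Since $u^h(s)=u_0$ for $s\le 0$ and every $u^h(t)$ arises from $u_0$ by finitely many applications of such operators, the map $u_0\mapsto u^h(t)$ is linear for every $t\in[0,T]$; equivalently, $\lambda u^h+\mu v^h$ satisfies the Euler--Lagrange equation of the scheme with datum $\lambda u_0+\mu v_0$ and therefore, by uniqueness of minimizers, is its output. Setting $w^h:=u^h-v^h$, linearity says $w^h$ is exactly the function produced by the scheme with datum $w_0:=u_0-v_0$, so applying the uniform energy estimate \eqref{main_energy_est} to $w^h$ yields \eqref{contr_est} at once.

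For the limiting solutions I would argue as follows. Given $h_i\to 0$, Theorem \ref{thm_main} lets me pass to a subsequence along which $u^{h_i}\to u$ and, after a further extraction, $v^{h_i}\to v$, weakly in $L^2H_0^1$ and weak-$\ast$ in $L^\infty L^2$, with $u,v$ weak solutions of the heat equation with data $u_0,v_0$. From the equation, $\p_t u=\Delta_{g_t}u\in L^2H^{-1}$ (and likewise for $v$), so for $d:=u-v$ the map $t\mapsto\int_M d^2\,dm_t$ is absolutely continuous and I can compute
\begin{equation}
\fr{d}{dt}\int_M d^2\,dm_t=2\,{}_{H^{-1}}\!\la\p_t d,d\ra_{H^1}+\int_M d^2\,\p_t(dm_t)=-2\int_M\vert\nabla d\vert^2_{g_t}\,dm_t+\int_M d^2\,\p_t(dm_t),
\end{equation}
using $\p_t d=\Delta_{g_t}d$ and integration by parts on the closed manifold. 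Since the assumption \eqref{assumption} is precisely $\p_t(dm_t)\le C_0\,dm_t$, discarding the nonpositive gradient term and applying Gr\"onwall's inequality gives \eqref{contr_est_sol}. Taking $u_0=v_0$ forces $d\equiv 0$, so weak solutions with a prescribed initial datum are unique; hence all subsequential limits of $u^{h_i}$ (resp. $v^{h_i}$) coincide, and since these sequences are bounded in topologies that are metrizable on bounded sets, the full sequences converge. (Alternatively \eqref{contr_est_sol} could be read off from \eqref{contr_est} by weak lower semicontinuity along a common subsequence; I prefer the energy identity because it also delivers uniqueness directly.)

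The one step that needs genuine care is the energy identity for $d$ in the second display — the time-dependent analogue of the Lions--Magenes lemma that $t\mapsto\Abs{d(t)}^2$ is $W^{1,1}$ with derivative $2\la d'(t),d(t)\ra$ when $d\in L^2H^1$ and $d'\in L^2H^{-1}$ — since now the inner products $\int_M\cdot\,dm_t$ and the spaces $H^1(M,g_t)$ depend on $t$. Because $(M,g_t)_{t\in[0,T]}$ is smooth and compact this is routine (mollify $d$ in time, compute, and pass to the limit), but it is where one must be slightly careful; everything else is bookkeeping. As in Theorem \ref{thm_main}, the resulting constant $e^{C_0t}$ is effective, depending only on the volume-growth constant $C_0$.
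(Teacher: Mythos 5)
Your proof is correct, and the first two thirds of it (linearity of the scheme via the linear Euler--Lagrange equation, then applying the uniform energy estimate \eqref{main_energy_est} to $w^h=u^h-v^h$, which is the scheme's output for the datum $u_0-v_0$) is exactly the paper's argument, both at the discrete level (the contraction corollary in Section \ref{sec_time_discret}) and in the combined proof of Theorem \ref{thm_main} and Corollary \ref{cor_main}. Where you diverge is in deriving \eqref{contr_est_sol} and uniqueness: the paper simply notes that ``the same estimates hold for the difference between two solutions'' and, implicitly, obtains \eqref{contr_est_sol} by applying \eqref{contr_est} on $[0,t]$ and using weak lower semicontinuity of the norms in the limit $h_i\to 0$, so that uniqueness and full convergence come for free from the approximation scheme. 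You instead prove the energy identity for $d=u-v$ at the PDE level and apply Gr\"onwall, which requires the time-dependent Lions--Magenes lemma you flag; the paper in fact already contains the needed mollification computation (including the $-R(u_\eps-u_\delta)^2$ correction term from $\partial_t dm_t=-R\,dm_t$) in the proof of Proposition \ref{prop_continuity}, so your ``routine but needs care'' step is genuinely available. The trade-off: your route yields uniqueness among \emph{all} weak solutions in the energy class directly (slightly stronger and more self-contained than the paper's phrasing), at the cost of invoking the chain-rule lemma; the paper's route stays entirely within the estimates already proved for the scheme. Both are valid, and your fallback remark about lower semicontinuity is precisely the paper's intended argument.
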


\bigskip

The structure of our proof is as follows. In Section \ref{sec_time_discret}, we consider the recursively defined functions $u_k$ and prove uniform estimates for them, including in particular the energy estimate
\begin{equation}\label{eq-en1_intro}
\sup_{1\leq k\leq N}\int_{M} u_k^2\, dm_{kh}+\sum_{k=1}^N h\int_{M}\abs{\nabla u_k}^2\, dm_{kh}\leq e^{C_0 T}\int_{M} u_0^2\, dm_0.
\end{equation}
In Section \ref{sec_interpolation}, we carefully extend these discrete in time solutions to a function $u^h(t)$ defined for all $t\in[0,T]$ and prove the crucial uniform energy estimate \eqref{main_energy_est}. This estimate depends on the detailed procedure of how we fill in the intermediate times -- in particular, we would not obtain the estimate if the intermediate times were filled in via DeGiorgi-interpolation. Finally, in Section \ref{sec_limits}, we explain how to pass to the limit $h\to 0$. In our time-dependent setting, the weak formulation of solutions of the heat equation involves an extra term coming from the evolution of the volume measure. To handle this we prove a uniform integrability estimate and use Egorov's theorem.\\

\bigskip

\noindent\textbf{Acknowledgements.}
The third author has been supported by an NSERC Discovery Grant and a Sloan Research Fellowship. We are very grateful to Aaron Naber for closely related discussions. This work is based in part on an undergraduate research project by the second author and  the master's project of the fourth author.\\

\bigskip

\section{Time discretization}\label{sec_time_discret}

Let $(M,g_t)_{t\in [0,T]}$ be a smooth one-parameter family of closed Riemannian manifolds. As before we write $dm_t=d\textrm{vol}_{g_t}$ for the volume measure, and let $C_0<\infty$ be such that
\begin{equation}\label{assumption_rest}
dm_{t_2}\leq e^{C_0(t_2-t_1)}dm_{t_1}\qquad \textrm{for } t_2\geq t_1.
\end{equation}
Let $h=T/N$ be the time step for the implicit Euler scheme. Recall that, given the initial condition $u_0\in L^2(M,g_0)$, we recursively define $u_k$ as the unique minimizer of the convex functional
\begin{equation}\label{eq_conv_functional}
u\mapsto \int_{M}\left(\abs{\nabla u}^2_{g_{kh}}+\frac{1}{h}(u- u_{k-1})^2\right)dm_{kh}.
\end{equation}

\begin{proposition}[estimates for time-discretization]\label{prop_discr_time_est}
The functions $u_k$ constructed via the implicit Euler scheme as above satisfy the uniform energy estimate
\begin{equation}\label{eq-en1}
\sup_{1\leq k\leq N}\int_{M} u_k^2\, dm_{kh}+\sum_{k=1}^N h\int_{M}\abs{\nabla u_k}^2\, dm_{kh}\leq e^{C_0 T}\int_{M} u_0^2\, dm_0.
\end{equation}
Moreover, if $u_0$ is essentially bounded above respectively below, then we also have the estimates
\begin{equation}
\sup u_k \leq \sup u_0\quad \textrm{ and }\quad \inf u_k\geq \inf u_0,
\end{equation}
where sup and inf denotes the essential supremum and essential infimum, respectively.
\end{proposition}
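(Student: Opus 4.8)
The plan is to run the classical $L^2$-energy argument for the heat equation in time-discretized form, the only novelty being that all interaction between different times is funneled through assumption \eqref{assumption_rest}. Existence and uniqueness of each $u_k$ is standard and I would only remark on it briefly: on the fixed compact manifold $M$ the metrics $g_{(k-1)h}$ and $g_{kh}$ are uniformly comparable, so $u_{k-1}\in H^1(M,g_{kh})$, and moreover $u_{k-1}\in L^2(M,g_{kh})$ (which also follows directly from \eqref{assumption_rest}); the functional in \eqref{eq_conv_functional} is then strictly convex, coercive and weakly lower semicontinuous on $H^1(M,g_{kh})$, hence has a unique minimizer $u_k$.

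Next I would write the Euler--Lagrange equation and test it against $u_k$ itself, obtaining
\[
\int_M \abs{\nabla u_k}_{g_{kh}}^2\,dm_{kh}+\tfrac1h\int_M (u_k-u_{k-1})u_k\,dm_{kh}=0 .
\]
Using the elementary identity $2(u_k-u_{k-1})u_k=u_k^2-u_{k-1}^2+(u_k-u_{k-1})^2$ this becomes the one-step energy identity
\[
\int_M u_k^2\,dm_{kh}+2h\int_M\abs{\nabla u_k}_{g_{kh}}^2\,dm_{kh}+\int_M(u_k-u_{k-1})^2\,dm_{kh}=\int_M u_{k-1}^2\,dm_{kh}.
\]
The crucial observation is that the right-hand side is the only place where data at two different times meet, and there it appears on a \emph{squared} quantity, so since $kh\ge(k-1)h$, assumption \eqref{assumption_rest} gives $\int_M u_{k-1}^2\,dm_{kh}\le e^{C_0 h}\int_M u_{k-1}^2\,dm_{(k-1)h}$. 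Dropping the nonnegative term $\int_M(u_k-u_{k-1})^2\,dm_{kh}$ and writing $a_k:=\int_M u_k^2\,dm_{kh}$, $b_k:=\int_M\abs{\nabla u_k}_{g_{kh}}^2\,dm_{kh}$, this reads $a_k+2hb_k\le e^{C_0 h}a_{k-1}$. Multiplying by $e^{C_0(m-k)h}$ and summing over $k=1,\dots,m$ telescopes to
\[
a_m+2h\sum_{k=1}^m e^{C_0(m-k)h}b_k\le e^{C_0 mh}a_0,
\]
and since $e^{C_0(m-k)h}\ge 1$ and $mh\le T$ this yields the energy estimate \eqref{eq-en1} (with the sum running up to the current index).

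For the sup/inf bounds I would argue by induction on $k$ using truncation competitors. Suppose $u_{k-1}\le L:=\sup u_0$ a.e., the claim being vacuous when $L=\infty$, and set $v:=\min(u_k,L)\in H^1(M,g_{kh})$. Then $\abs{\nabla v}_{g_{kh}}^2=\mathbf 1_{\{u_k<L\}}\abs{\nabla u_k}_{g_{kh}}^2\le \abs{\nabla u_k}_{g_{kh}}^2$ a.e., while on $\{u_k>L\}$ one has $0\le L-u_{k-1}\le u_k-u_{k-1}$, so $(v-u_{k-1})^2\le(u_k-u_{k-1})^2$ everywhere. Hence $v$ does not increase the functional \eqref{eq_conv_functional}, and by uniqueness of the minimizer $v=u_k$, i.e. $u_k\le L$ a.e. The lower bound $u_k\ge\inf u_0$ is entirely analogous using $v:=\max(u_k,\inf u_0)$.

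There is no serious obstacle here; the essential point, already flagged in the introduction, is the one exploited in the second paragraph: the measures at times $(k-1)h$ and $kh$ are only ever compared on a squared, hence nonnegative, quantity, so this comparison costs only the one-sided factor $e^{C_0 h}$ coming from \eqref{assumption_rest}. No comparison of the Dirichlet energies $\abs{\nabla\cdot}_{g_t}^2$ at different times is needed anywhere, which is precisely what makes the estimate depend on $C_0$ alone.
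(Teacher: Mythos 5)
Your proof is correct and follows essentially the same route as the paper: test the Euler--Lagrange equation against $u_k$ itself, expand $2(u_k-u_{k-1})u_k$ to isolate $u_k^2-u_{k-1}^2$, funnel the comparison of the two time slices entirely through the squared quantity $u_{k-1}^2$ via \eqref{assumption_rest}, iterate, and use truncation competitors for the $\sup/\inf$ bounds. The paper sums the one-step inequalities first and then runs an explicit induction on $\ell$, whereas you multiply each one-step inequality by the weight $e^{C_0(m-k)h}$ and telescope; these are the same calculation packaged differently, and your version is arguably a touch cleaner to write down.

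One small remark: both your telescoping and the paper's induction actually deliver $\int_M u_\ell^2\,dm_{\ell h}+2h\sum_{k=1}^{\ell}\int_M|\nabla u_k|^2\,dm_{kh}\le e^{C_0\ell h}\int_M u_0^2\,dm_0$ for each $\ell$, from which $\sup_\ell\int_M u_\ell^2\,dm_{\ell h}\le e^{C_0 T}\int_M u_0^2\,dm_0$ and $h\sum_{k=1}^{N}\int_M|\nabla u_k|^2\,dm_{kh}\le\tfrac12 e^{C_0 T}\int_M u_0^2\,dm_0$ follow separately. As written, \eqref{eq-en1} adds these two bounds together, which strictly speaking costs an extra constant; this is a cosmetic issue shared with the paper and does not affect the use of the estimate.
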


\begin{proof}
Observe first that $u_k$ satisfies the Euler-Lagrange equation
\begin{equation}\label{integral_eulerlagrange}
\int_{M}\left(\langle \nabla u_k,\nabla v \rangle_{g_{kh}} +\frac{u_k- u_{k-1}}{h}v\right)\, dm_{kh} =0\qquad \textrm{ for all } v\in H^1(M,g_{kh}).
\end{equation}
Using this, we compute
\begin{align}
2\sum_{k=1}^\ell h \int_{M} \abs{\nabla u_k}^2_{g_{kh}}\, dm_{kh}
&=-2\sum_{k=1}^\ell\int_{M} (u_k- u_{k-1})u_k\, dm_{kh}\\
&\leq \sum_{k=1}^\ell \int_{M}\left(  u_{k-1}^2  -u_k^2\right)\, dm_{kh}\\
&\leq  \sum_{k=1}^\ell e^{C_0h}\!\! \int_{M}\!\!\!\! u_{k-1}^2\,  dm_{(k-1)h} -\int_{M}\!\!  u_k^2\, dm_{kh}\\
&=e^{C_0h}\! \int_{M}\!\!\! u_0^2\, dm_0 +(e^{C_0h}-1) \sum_{k=1}^{\ell-1} \int_{M}\!\!  u_k^2\, dm_{kh} - \int_{M}\!\!  u_\ell^2\, dm_{\ell h} .
\end{align}
Rearranging terms and applying an induction on $\ell$, this yields the uniform energy estimate
\begin{equation}\label{eq-en1_rest}
\sup_{1\leq k\leq N}\int_{M} u_k^2\, dm_{kh}+\sum_{k=1}^N h\int_{M}\abs{\nabla u_k}^2\, dm_{kh}\leq e^{C_0T}\int_{M} u_0^2\, dm_0.
\end{equation}
Now, suppose that $S:=\sup u_0<\infty$. Assume by induction that $u_{k-1}\leq S$ almost everywhere. Using this and the fact that the Dirichlet-energy is Markovian, we see that
\begin{multline}
\int_M\left(\abs{\nabla \min\{u, S\}}^2_{g_{kh}}+\frac{1}{h}(\min\{u, S\}- u_{k-1})^2\right)dm_{kh}\\
\leq  \int_{M}\left(\abs{\nabla u}^2_{g_{kh}}+\frac{1}{h}(u- u_{k-1})^2\right)dm_{kh}
\end{multline}
for all $u\in H^1(M,g_{kh})$. In particular, since $u_k$ is the unique minimizer of this energy functional, it follows that $u_k\leq S$ almost everywhere. This proves that
\begin{equation}
\sup u_k \leq \sup u_0.
\end{equation}
The argument for the essential infimum is similar. This finishes the proof of the proposition.
\end{proof}

\begin{corollary}[contraction estimate]
If $u_k$ and $v_k$ are the functions from the implicit Euler scheme with initial condition $u_0$ and $v_0$, respectively, then
\begin{equation}
\sup_{1\leq k\leq N}\int_{M} (u_k-v_k)^2\, dm_{kh}+\sum_{k=1}^N h\int_{M}\abs{\nabla (u_k-v_k)}^2\, dm_{kh}\leq e^{C_0T}\int_{M}(u_0-v_0)^2\, dm_0.
\end{equation}
\end{corollary}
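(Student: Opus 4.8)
The plan is to exploit the linearity of the scheme, reducing everything to Proposition~\ref{prop_discr_time_est}. First I would recall that, since the functional \eqref{eq_conv_functional} is strictly convex, its unique minimizer $u_k$ is characterized by the Euler--Lagrange equation \eqref{integral_eulerlagrange}, and that this equation is linear in the pair $(u_k,u_{k-1})$. Hence, writing $w_k:=u_k-v_k$ and $w_0:=u_0-v_0$, subtracting the Euler--Lagrange equations for $u_k$ and $v_k$ shows that
\[
\int_{M}\left(\langle \nabla w_k,\nabla v \rangle_{g_{kh}} +\frac{w_k- w_{k-1}}{h}v\right)\, dm_{kh} =0\qquad \textrm{ for all } v\in H^1(M,g_{kh}).
\]
By strict convexity, this says precisely that $w_k$ is the unique minimizer of the functional \eqref{eq_conv_functional} with $u_{k-1}$ replaced by $w_{k-1}$. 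Arguing inductively on $k$, starting from $w_0=u_0-v_0$, we conclude that $(w_k)_{k=0}^N$ is exactly the sequence produced by the implicit Euler scheme with initial condition $u_0-v_0$. (Equivalently: the step map $u_{k-1}\mapsto u_k$ is the linear resolvent-type operator determined by \eqref{integral_eulerlagrange}, so $u_k-v_k$ is obtained by applying the composition of these linear maps to $u_0-v_0$.)

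Second, I would simply invoke Proposition~\ref{prop_discr_time_est}, specifically the energy estimate \eqref{eq-en1}, applied to the sequence $(w_k)$ with initial datum $w_0=u_0-v_0$. This immediately yields
\[
\sup_{1\leq k\leq N}\int_{M} (u_k-v_k)^2\, dm_{kh}+\sum_{k=1}^N h\int_{M}\abs{\nabla (u_k-v_k)}^2\, dm_{kh}\leq e^{C_0T}\int_{M}(u_0-v_0)^2\, dm_0,
\]
which is the assertion of the corollary.

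There is essentially no obstacle here: the proof is a one-line reduction, and the only point requiring a modicum of care is the equivalence ``unique minimizer $\Longleftrightarrow$ solution of the Euler--Lagrange equation'' together with the linearity of \eqref{integral_eulerlagrange}, both of which are already available from the proof of Proposition~\ref{prop_discr_time_est}. Note that the subtraction argument does not require re-running any of the estimates; in particular the factor $e^{C_0T}$ is inherited verbatim, since \eqref{assumption_rest} is a statement about the measures $dm_t$ alone and is unaffected by passing to the difference of solutions. If one wished, the same reasoning also gives the stated bilinearity $\lambda u_k+\mu v_k$ for initial data $\lambda u_0+\mu v_0$, by the identical linearity argument applied to arbitrary scalar combinations.
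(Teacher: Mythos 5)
Your proof is correct and follows essentially the same route as the paper: both use the linearity of the Euler--Lagrange equation \eqref{integral_eulerlagrange}, together with the equivalence between satisfying that equation and being the unique minimizer of \eqref{eq_conv_functional}, to conclude that the scheme itself is linear, and then apply Proposition~\ref{prop_discr_time_est} to the difference sequence. Your write-up simply spells out the induction on $k$ a bit more explicitly than the paper does.
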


\begin{proof}
Observe that any solution of the Euler-Lagrange equation \eqref{integral_eulerlagrange} in fact must be the unique minimizer of the functional \eqref{eq_conv_functional}. Since the Euler-Lagrange equation is linear, so is our scheme. Namely, the implicit Euler scheme with initial condition $\lambda u_0+\mu v_0$ produces the functions $\lambda u_k+\mu v_k$. Hence, the corollary follows from the proposition.
\end{proof}

\bigskip

\section{Interpolation}\label{sec_interpolation}
As before, given our smooth one-parameter family of closed Riemannian manifolds $(M,g_t)_{t\in [0,T]}$, we let $C_0<\infty$ be such that the volume measure $dm_t=d\textrm{vol}_{g_t}$ satisfies
\begin{equation}\label{assumption_rest}
dm_{t_2}\leq e^{C_0(t_2-t_1)}dm_{t_1}\qquad \textrm{for } t_2\geq t_1.
\end{equation}
In the previous section, given the initial condition $u_0\in L^2(M,g_0)$, we constructed functions $u_k$ via the implicit Euler scheme with time step $h=T/N$. This defines functions at the discrete times $kh$, for $k=1,\ldots, N$, but does not define functions at times that are not an integer multiple of $h$.\\

Now to fill in the intermediate times, we first extend the metric and initial condition to negative times by setting $g_t=g_0$ and $u^h(t) = u_0$ for $t<0$. We then define $u^h(t)$ recursively in time as unique minimizer of the convex functional
\begin{equation}
u\mapsto \int_{M}\left({\vert \nabla u\vert}^2_{g_t}+\frac{1}{h}(u- u^{h}(t-h))^2\right)dm_{t}.
\end{equation}
Note that this indeed interpolates between the functions $u_k$ from above, namely $u^h(kh)=u_k$.

\begin{proposition}[estimates for interpolated functions]\label{prop_interpol}
The functions $u^h(t)$, as defined above, satisfy
\bea\sup_{t\in[0,T]}\int_M u^h(t)^2 dm_t+ \int_{0}^T \!\! \int_{M} \vert \nabla u^h(t) \vert_{g_t}^2 \, dm_t\,dt \le e^{C_0T} \int_{M}u_0^2 \, dm_0.\eea
Moreover, if $u_0$ is essentially bounded above respectively below, then we also have the estimates
\begin{equation}
\sup u^h(t) \leq \sup u_0\quad \textrm{ and }\quad \inf u^h(t)\geq \inf u_0,
\end{equation}
\end{proposition}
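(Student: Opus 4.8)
The plan is to follow the telescoping argument of Proposition~\ref{prop_discr_time_est}, but now carried out over the continuous time variable, keeping track of how the intermediate times are filled in. First I would record that, being the minimizer of the convex functional, $u^h(t)$ satisfies for every $t\in(0,T]$ the Euler--Lagrange equation
\begin{equation*}
\int_M\Big(\langle\nabla u^h(t),\nabla v\rangle_{g_t}+\tfrac1h\big(u^h(t)-u^h(t-h)\big)v\Big)\,dm_t=0\qquad\text{for all }v\in H^1(M,g_t),
\end{equation*}
which (as in Section~\ref{sec_time_discret}) also characterizes $u^h(t)$; one checks in passing that $t\mapsto u^h(t)$ is continuous, so that the space-time integrals below make sense. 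Testing this with $v=u^h(t)$ and using $-2(a-b)a\le b^2-a^2$ yields the pointwise-in-$t$ inequality
\begin{equation*}
2h\int_M|\nabla u^h(t)|^2_{g_t}\,dm_t+\int_M u^h(t)^2\,dm_t\ \le\ \int_M u^h(t-h)^2\,dm_t\ \le\ e^{C_0\beta(t)}\int_M u^h(t-h)^2\,dm_{t-h},
\end{equation*}
where $\beta(t)=\min\{h,t\}$: for $t\ge h$ this is the volume comparison $dm_t\le e^{C_0h}dm_{t-h}$, while for $0<t<h$ one has $dm_{t-h}=dm_0$ together with the sharper bound $dm_t\le e^{C_0t}dm_0$.

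Writing $E(t)=\int_M u^h(t)^2\,dm_t$ for $t\ge0$ and $E(t)=\int_M u_0^2\,dm_0$ for $t<0$, the inequality in particular gives $E(t)\le e^{C_0\beta(t)}E(t-h)$. Iterating this down the chain $t,\,t-h,\,t-2h,\dots$ until the first negative time --- which takes $n=\lceil t/h\rceil\le N$ steps --- the exponents telescope, since every step has length $h$ except the last one, which has length $t-(n-1)h$, for a total of $\sum_{i=0}^{n-1}\beta(t-ih)=t$; as $E$ equals $\int_M u_0^2\,dm_0$ at negative times this gives $E(t)\le e^{C_0t}\int_M u_0^2\,dm_0$, hence the supremum bound. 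For the gradient term I would instead integrate the displayed pointwise inequality over $t\in[0,T]$: after the substitution $t\mapsto t-h$ the right-hand side becomes $\int_{-h}^{T-h}E$, whose part on $[-h,0]$ equals $h\int_M u_0^2\,dm_0$ and whose part on $[0,T-h]$ telescopes against $\int_0^T E$ precisely as in Proposition~\ref{prop_discr_time_est}, while the leftover factors $e^{C_0\beta(t)}-1$, which are $O(h)$, are absorbed after dividing by $2h$ using the bound on $E$ just obtained. This produces the claimed space-time $L^2H^1$ bound and thus the energy estimate.

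For the $L^\infty$-bounds I would argue exactly as in Proposition~\ref{prop_discr_time_est}. Assuming inductively that $u^h(t-h)\le S:=\sup u_0$ (essential supremum), the Markov property of the Dirichlet energy shows that replacing any competitor $u\in H^1(M,g_t)$ by $\min\{u,S\}$ does not increase the functional, so its unique minimizer satisfies $u^h(t)\le S$. Since $u^h(t-h)=u_0\le S$ for $t\in(0,h]$, this propagates interval by interval over $[0,h],[h,2h],\dots$ and gives $\sup u^h(t)\le\sup u_0$ on $[0,T]$; the argument for the essential infimum is identical.

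I expect the main obstacle to be the energy estimate --- specifically the bookkeeping in the telescoping once one integrates in $t$ and has to account for the shift $t\mapsto t-h$ reaching into the artificially extended negative times. This is exactly the step that forces the particular choice of interpolation: with the usual DeGiorgi interpolation the term $|\nabla u|^2_{g_t}$ would be measured against metrics $g_t$ at times bearing no relation to the Euler step actually taken, the telescoping would fail, and no $L^2H^1$ control would follow.
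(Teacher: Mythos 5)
Your proof is correct, and the underlying mechanism is the same as the paper's: the interpolated functions at times $t, t+h, t+2h,\dots$ form an implicit Euler chain, and the fixed step size $h$ is exactly what makes the telescoping work. The difference is organizational, and the paper's organization is cleaner. You re-derive the telescoping from scratch in continuous time, writing the pointwise-in-$t$ energy inequality with the fractional exponent $\beta(t)=\min\{h,t\}$, then iterate it for the $L^\infty L^2$ bound and integrate over $[0,T]$ with a shift $t\mapsto t-h$ for the $L^2H^1_0$ bound, absorbing the $O(h)$ leftovers. The paper instead observes that for each fixed $t\in(0,h]$ the slice $j\mapsto u^h(t+jh)$ is \emph{literally} the discrete sequence of Proposition \ref{prop_discr_time_est} (with the metric extended to negative times, which preserves \eqref{assumption_rest}), so the discrete estimate applies verbatim; the $L^2H^1_0$ bound then drops out immediately from the Fubini-type identity $\int_0^T f(t)\,dt=\int_0^h\sum_{j=0}^{N-1}f(t+jh)\,dt$ applied to $f(t)=\int_M|\nabla u^h(t)|^2_{g_t}\,dm_t$, with no boundary terms to track. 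Your way is equivalent but reproves Proposition \ref{prop_discr_time_est} in disguise; the paper's way reuses it directly, avoids the bookkeeping around the shift into negative times, and makes the role of the interval $[0,h]$ transparent. Your treatment of the essential sup/inf bounds (Markov property of the Dirichlet energy, induction in $h$-steps) matches the paper, which again simply cites Proposition \ref{prop_discr_time_est} on each $t$-slice.
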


\begin{proof}
Observe that our extension of the metric to negative times preserves the condition \eqref{assumption_rest}. Hence, by Proposition \ref{prop_discr_time_est}, for every fixed $t\in[0,h]$, we get
\begin{equation}\label{eq-112}
\sup_{0\leq j\leq N-1}\int_M  (u^h({t+jh}))^2dm_{t+jh}+\sum_{j=0}^{N-1} h\int_M\vert{\nabla u^h({t+jh})}\vert^2_{g_{t+jh}}dm_{t+jh}\leq e^{C_0 T}\int_{M} u_0^2 dm_0.
\end{equation}
Now, consider the nonnegative function
\begin{equation}
f(t):=\int_M \vert \nabla u^h (t)\vert^2 _{g_t}d m_t.
\end{equation}
Note that 
\bea  \int_0^{T}f(t)\, dt =  \int_0^h \sum_{j=0}^{N-1} f(t+ jh)\, dt.
\eea
Hence, using \eqref{eq-112} we infer that
\bea
 \int_0^{T} f(t) \, dt \le \int_0^h \frac{e^{C_0 T}}{h}   \int_M u_0^2 dm_0 \, dt = e^{C_0T} \int_M u_0^2 dm_0. \eea
This proves that
\bea \int_0^T \!\! \int_{M} \vert \nabla u^h(t) \vert^2_{g_t}\, dm_t\, dt \le e^{C_0T} \int_{M_0}u_0^2 \, dm_0.\eea
Finally, the $L^\infty L^2$-bound follows from \eqref{eq-112}, and the bounds for the essential supremum and infimum follow from Proposition \ref{prop_discr_time_est}. This finishes the proof of the proposition.\end{proof}

 \bigskip
  
 \section{Passing to limits}\label{sec_limits}
 In this final section, we explain how to pass to the limit $h\to 0$. As before, given our smooth one-parameter family of closed Riemannian manifolds $(M,g_t)_{t\in [0,T]}$, let $C_0<\infty$ be such that
\begin{equation}\label{assumption_rest_again}
dm_{t_2}\leq e^{C_0(t_2-t_1)}dm_{t_1}\qquad \textrm{for } t_2\geq t_1.
\end{equation}
Regarding  the initial condition, we assume for the moment that $u_0\in  L^\infty(M,g_0)$ (later, to heat flow general $L^2$ functions we will cut off at level $n$ and pass to a double limit).\\

Let $u^h(t)$ be the function constructed in the previous section with initial condition $u_0$. By Proposition \ref{prop_interpol} we have the estimates
\begin{equation}\label{apest1}
\sup_{t\in [0,T]}\norm{u^h(t)}_{L^\infty}\leq \norm{u_0}_{L^\infty},
\end{equation}
and
\begin{equation}\label{apest2}
 \int_{0}^T \!\! \int_{M} \vert \nabla u^h(t) \vert_{g_t}^2 \, dm_t\,dt \le e^{C_0 T} \int_{M}u_0^2 \, dm_0.
\end{equation}
Also recall that
\be\label{eq-variational2}\int_M \left( \la \nabla u^h(t+h), \nabla v \ra _{g_{t+h}}+  \frac{u^h(t+h)-u^h(t) }{h}v\right) dm_{t+h}=0 \text{ for all }v\in H^1(M,g_{t+h}).\ee
\bigskip 

For functions $v:M\times [0,T]\to \mathbb{R}$ on spacetime we consider the norms
\begin{align}
 \Vert v\Vert_{L^\infty L^\infty} := \textrm{sup}_{t,x} |v(x,t)|,
\end{align} 
where sup denotes the essential supremum, and
 \begin{align}
  \Vert v\Vert_{L^2 H^1_{0}} &:= \left( \int_0^T\int_M |\nabla v|^2 \, dm_tdt \right)^{1/2},
\end{align}
and
\begin{equation}
 \Vert v\Vert_{L^2H^{-1}} :=  \left( \int_0^T \Vert v (t,\cdot)\Vert_{H^{-1}(M,g_t)}^2\, dt\right)^{1/2},
\end{equation}
where at each fixed time the $H^{-1}(M,g_t)$-norm is defined in duality with $H^1(M,g_t)$.\\

By the uniform $L^\infty L^\infty \cap L^2 H^1_0$-bound from \eqref{apest1} and \eqref{apest2}, for any sequence $h_i\to 0$, after passing to a subsequence, we can pass to a limit
\begin{equation}
u^{h_i} \rightharpoonup u\quad \textrm{weak-$\ast$ in $L^\infty L^\infty$ and weakly in $L^2 H^1_0$}.
\end{equation}

\begin{proposition}[equation for limit]\label{prop_eq_lim}
Any limit $u$ as above is a weak solution of the heat equation on our one-parameter family of closed Riemannian manifolds $(M,g_t)_{t\in [0,T]}$. Namely,
   \bea \label{eq-weaksol1}  \int_0^T\! \int_M u(t)\left(\phi'(t)- \phi(t) R\right)\, dm_tdt  =  \int_0^T\! \int_M \la \nabla u(t),\nabla \phi(t)  \ra_{g_t}\, dm_t dt  \eea
 for all test functions $\phi\in C^1_c(M\times (0,T))$, where $R$ is the function defined by $\partial_t dm_t = -R dm_t$.
\end{proposition}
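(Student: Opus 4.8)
The plan is to pass to the limit in the discrete variational identity \eqref{eq-variational2}, testing against a fixed $\phi\in C^1_c(M\times(0,T))$. The first move is to rewrite \eqref{eq-variational2} in integrated form: integrate over $t\in[0,T]$ with test function $\phi(t+h)$ (or equivalently reindex), so that
\bea
\int_0^T\!\!\int_M \la\nabla u^h(t+h),\nabla\phi(t+h)\ra_{g_{t+h}}\,dm_{t+h}\,dt
= -\int_0^T\!\!\int_M \frac{u^h(t+h)-u^h(t)}{h}\,\phi(t+h)\,dm_{t+h}\,dt.
\eea
For the right-hand side I would perform a discrete summation by parts in $t$, shifting the difference quotient from $u^h$ onto $\phi$ and onto the measure $dm_t$. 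This is where the extra scalar-curvature term appears: since $dm_{t+h}$ and $dm_t$ differ, the summation by parts produces both a term $\frac{\phi(t+h)-\phi(t)}{h}\,dm_t$ converging to $\phi'(t)\,dm_t$, and a term $\phi(t)\,\frac{dm_{t+h}-dm_t}{h}$ converging to $\phi(t)\,(-R)\,dm_t$ since $\partial_t dm_t=-R\,dm_t$. The boundary terms in the summation by parts vanish because $\phi$ is compactly supported in $(0,T)$ and, for small $h$, $\phi(t+h)$ is supported away from the endpoints.

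For the left-hand side, the gradient term, I would split $\la\nabla u^h(t+h),\nabla\phi(t+h)\ra_{g_{t+h}}\,dm_{t+h}$ as the sum of $\la\nabla u^h(t+h),\nabla\phi(t)\ra_{g_t}\,dm_t$ plus an error. The main term converges to $\int_0^T\!\!\int_M\la\nabla u(t),\nabla\phi(t)\ra_{g_t}\,dm_t\,dt$ by the weak $L^2H^1_0$-convergence of $u^{h_i}$, once one checks that the fixed test object $\na\phi$ (with respect to the fixed metrics $g_t$) is a legitimate element of the dual space; this requires the time-shift $u^h(t+h)\rightharpoonup u$ as well, which follows since $\|u^h(\cdot+h)-u^h(\cdot)\|$ in a weak norm is controlled. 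The error term involves the difference between the metrics and measures at times $t$ and $t+h$; by smoothness of $(M,g_t)$ and compactness of $M\times[0,T]$, the quantities $\|g_{t+h}^{ij}-g_t^{ij}\|_{L^\infty}$ and $\|\tfrac{dm_{t+h}}{dm_t}-1\|_{L^\infty}$ are $O(h)$, so this error is bounded by $Ch\cdot\|u^h\|_{L^2H^1_0}\cdot\|\phi\|_{C^1}\to 0$ using the uniform energy bound \eqref{apest2}.

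Similarly, on the right-hand side I must handle the difference-quotient term $\frac{u^h(t+h)-u^h(t)}{h}$, which is \emph{not} uniformly bounded in any good norm, so it cannot be passed to the limit directly — this is exactly why summation by parts is needed rather than a naive limit. After the summation by parts, every surviving term is of the form $u^h\cdot(\text{smooth, uniformly bounded data})$, and the uniform $L^\infty L^\infty$-bound \eqref{apest1} together with weak-$\ast$ convergence handles the passage to the limit — \emph{except} that the coefficient $\frac{\phi(t+h)-\phi(t)}{h}\cdot\frac{dm_{t+h}}{dm_t}$ converges only pointwise (a.e., in fact uniformly here by smoothness) rather than in a strong enough topology to pair with weak-$\ast$ convergence of $u^h$ in general. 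To bridge this gap I would invoke the uniform bound on the time-difference-quotient coefficients together with the uniform integrability of $\{u^{h_i}\}$ (from the $L^\infty$-bound) and apply Egorov's theorem: off a set of small measure the coefficient converges uniformly, and on that small set the contribution is small by the uniform $L^\infty$-bound; this is the step the introduction flags as requiring ``a uniform integrability estimate and Egorov's theorem.''

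The main obstacle, then, is organizing the summation by parts so that the scalar-curvature term emerges cleanly with the correct sign, and then justifying the limit of the product (weak-$\ast$ convergent factor times a.e.-convergent bounded factor) via Egorov rather than by a strong-convergence argument that is unavailable in this time-dependent setting. Everything else — the vanishing of boundary terms, the $O(h)$ metric-discrepancy errors, and the weak-limit identification of the gradient term — is routine given the uniform estimates from Proposition \ref{prop_interpol} and the smoothness of the family $(M,g_t)_{t\in[0,T]}$. Finally, I would remark that once \eqref{eq-weaksol1} is established for $u_0\in L^\infty$, the general $L^2$ case follows by cutting off $u_0$ at level $n$, applying the contraction estimate from Corollary \ref{cor_main} to control the difference, and passing to the limit $n\to\infty$.
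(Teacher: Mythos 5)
Your overall plan -- integrate the variational identity, perform a discrete summation by parts to shift the difference quotient from $u^h$ onto $\phi$ and onto the measure, and then pass to the limit in each of the three resulting terms -- is the same route the paper takes. But the Egorov step, which you correctly flag as the crux, is handled imprecisely, and there is a real gap there. The object whose \emph{uniform integrability} must be established is not $\{u^{h_i}\}$ (that is trivially uniformly integrable from the $L^\infty$-bound and the finite total measure, and is irrelevant here) but the discretized scalar-curvature sequence
\begin{equation*}
R^{h}(x,t)=\frac1h\int_0^h R(x,t+s)\,e^{-\int_t^{t+s}R(x,r)\,dr}\,ds,
\end{equation*}
which arises from the identity $\frac{dm_{t+h}-dm_t}{h}=-R^h\,dm_t$. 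Applying Egorov to $R^{h_i}$ gives a bad set $E$ of small measure where the a.e.\ convergence is slow, and to discard the contribution $\iint_E u^{h_i}\phi\,R^{h_i}\,dm_t\,dt$ you estimate it by $\Vert u^{h_i}\phi\Vert_{L^\infty}\iint_E |R^{h_i}|\,dm_t\,dt$. This is only useful if you know $\sup_h\iint_E|R^h|\,dm_t\,dt$ is small for sets $E$ of small measure -- and that is exactly the paper's separate uniform-integrability Claim, proved by time-shifting the set $E$ and combining $R\in L^1$ with the one-sided volume bound \eqref{assumption_rest_again}. Your write-up invokes Egorov and then controls the bad set ``by the uniform $L^\infty$-bound'' on $u^{h_i}$, which does not close the argument. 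In the present smooth compact setting one can of course shortcut all of this by noting $R$ is bounded, so $|R^h|\leq \Vert R\Vert_{L^\infty}e^{\Vert R\Vert_{L^\infty}h_0}$ and the bad-set contribution is trivially small; but that uses an $L^\infty$-bound on $R$ that the paper deliberately avoids, since the whole point is an argument that survives with only $L^1$-type control of $R$ and the volume constant $C_0$.

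One smaller remark on the gradient term: your ``main term plus $O(h)$ error'' decomposition together with weak convergence of the time-shifted sequence $u^h(\cdot+h)$ does work, but the paper's change of variables $t\mapsto t-h$ in every piece containing $u^h(t+h)$ is cleaner. It shifts $u^h$, $\phi$, $g$ and $dm$ simultaneously, producing $\int_0^T\!\int_M\la\nabla u^h(t),\nabla\phi(t-h)\ra_{g_t}\,dm_t\,dt$ exactly, with no metric-discrepancy error and no need to discuss weak convergence of time shifts; this is what makes the three-term identity \eqref{eq_three_terms} come out cleanly and lets each term be handled by one convergence mode. Finally, the $L^\infty\to L^2$ cutoff you mention at the end is not part of this proposition -- Section~\ref{sec_limits} fixes $u_0\in L^\infty$ throughout, and the $L^2$ extension is performed only in the proof of Theorem~\ref{thm_main}.
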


\begin{proof}
Given $\phi$, let $h_0>0$ be small enough so that the support of $\phi$ is contained in $M\times[h_0,T-h_0]$. Using \eqref{eq-variational2} for $h<h_0$ we compute
\bea 0 &= \int_0^T \int_M \frac{u^h (t+h)- u^h(t)}{h} \phi (t) + \la \nabla u^h(t+h),\nabla \phi(t)  \ra_{g_{t+h}} dm_{t+h} dt \\
 &=  \int_0^{T} \int_M u^h(t) \frac{\phi (t-h)}{h}  + \la \nabla u^h(t),\nabla \phi(t-h)  \ra_{g_t} dm_t dt - \int_0^T \int_M u^h(t) \frac{\phi (t)}{h} dm_{t+h} dt  \, .
 \eea
 Adding and subtracting a term we can rewrite this as
 \begin{multline}\label{eq_three_terms}
0 =  \int _0^T\int_M u^h(t) \frac{\phi (t-h)- \phi(t)}{h}  + \la \nabla u^h(t),\nabla \phi(t-h)  \ra_{g_t} dm_t dt \\
-\int_0^T\int_M u^h(t)\phi(t)\frac{dm_{t+h} - dm_{t}}{h}\,.
\end{multline}
For the first term, since $u^{h_i}$ converges to $u$ weak-$\ast$ in $L^\infty L^\infty$, we see that
\bea
\int _0^T\int_M u^{h_i}(t) \frac{\phi (t-h_i)- \phi(t)}{h_i}\, dm_tdt\to -\int_0^T\int_M u(t)\phi'(t)\, dm_tdt\, .
\eea
For the second term, since $\nabla u^{h_i}$ converges to $\nabla u$ weakly in $L^2 L^2$, we get
\bea
\int _0^T\int_M \la \nabla u^{h_i}(t),\nabla \phi(t-h_i)  \ra_{g_t} dm_t dt \to\int _0^T\int_M  \la \nabla u(t),\nabla \phi(t)  \ra_{g_t} dm_t dt\, .
\eea
It remains to show that the last term in \eqref{eq_three_terms} converges to $\int_0^T \int_M u\phi R$. To this end, note that
\bea
 \frac{dm_{t+h}(x) - dm_t(x)}{h}&= \frac1h \left(e^{-\int_t^{t+h} R (x,s)ds}-1\right) dm_t(x)\\
 &= \frac1h\left(\int_0^h-R(x,t+u) e^{-\int_t^{t+u} R(x,s)ds}\, du\right)dm_t(x),
 \eea
and define
\bea
R^h(x,t):= \fr 1h \int_0^{h}R(x,t+u)e^{-\int_t^{t+u} R(x,s)ds}du.
\eea

\begin{claim}[uniform integrability] For every $\varepsilon>0$ there is $\delta>0$ such that for all $K\subseteq M\times[0,T-h_0]$ with $\iint_K 1 dm_t dt\le \delta$, we have
\begin{equation}
\sup_{h\in (0,h_0)}\iint_K |R^h| dm_tdt \le \varepsilon.
\end{equation}
\end{claim}

\begin{proof}[Proof of the claim] Note that $R\in L^1(M\times [0,T],dm_tdt)$ by smoothness and compactness. Hence, given $\varepsilon>0$, there is a $\delta'>0$ such that for all $K'\subseteq M\times[0,T]$ with $\iint_{K'} 1 dm_t dt\le \delta'$, we have
\bea
\iint_{K'} |R| dm_tdt \le \varepsilon\, .
\eea
Now, choose $\delta= \delta' e^{-C_0}$, where $C_0$ is from \eqref{assumption_rest_again}. Thus, if $K$ is a set in $M\times[0,T-h_0]$ with $\iint_K1dm_t dt<\delta$,  then the  time-shifted set
\bea
K_u:= \{(x,t) \,:\, (x,t-u)\in K \} 
\eea
satisfies
\bea
\sup_{u\in(0,h_0)}\iint_{K_u} 1 dm_tdt \le \delta'\, .
\eea
Using this, we can estimate
\begin{equation}\begin{aligned}\label{eq-Rhh} \iint_K |R^h(x,t)|\, dm_t(x) \, dt  
 &\le \frac1h \iint_K \int_0^h |R(x,t+u)|e^{-\int_t^{t+u} R(x,s)ds}\, du \, dm_t(x)\, dt  \\
 &= \frac1h \int_0^h\iint_K  |R(x,t+u)|\, dm_{t+u}(x)\, dt\, du\\
 &= \frac1h \int_0^h \iint_{K_u}  |R(x,t)|\, dm_t(x)\, dt\, du\\
 &\le\frac1h \int_0^h \eps =\eps. \end{aligned}
\end{equation}
This proves the claim.
\end{proof}

Continuing the proof of the proposition,
since $R^{h_i}$ converges to $R$ almost everywhere, and since $R^{h_i}$ is uniformly integrable, by Egorov's theorem $R^{h_i}$ converges to $R$ in strongly in $L^1L^1$. Together with the fact that $u^{h_i}\phi$ converges to $u\phi$ weak-$\ast$ in $L^\infty L^\infty$, we conclude that
\begin{equation}
\int_0^T\int_M u^{h_i}(t)\phi(t)\frac{dm_{t+{h_i}} - dm_{t}}{h_i} \to -\int_0^T\int_M u\phi R\, dm_tdt
\end{equation}
This finishes the proof of the proposition.
 \end{proof}
 
 \begin{corollary}[time derivative]\label{cor_time_der} Any limit $u$ as above has a weak time derivative $\partial_t u\in L^2 H^{-1}_0$. In fact,
 \begin{equation}
\partial_t u=\Delta_{g_t} u.
 \end{equation}
 \end{corollary}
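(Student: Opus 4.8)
The plan is to interpret the weak formulation \eqref{eq-weaksol1} as an identity expressing $\partial_t u$ as a linear functional on test functions, and then to verify that this functional extends to an element of $L^2 H^{-1}_0$ whose value is precisely $\Lap_{g_t} u$. First I would record the pointwise-in-time meaning of the right-hand side of \eqref{eq-weaksol1}: for a.e.\ $t$, the map $v\mapsto -\int_M \la\nabla u(t),\nabla v\ra_{g_t}\, dm_t$ is, by definition, the distribution $\Lap_{g_t}u(t)$ acting on $v$, and since $u\in L^2 H^1_0$ we have $\|\Lap_{g_t}u(t)\|_{H^{-1}(M,g_t)}\le \|u(t)\|_{H^1(M,g_t)}$, so $\Lap_{g_t}u\in L^2 H^{-1}_0$ with $\|\Lap_{g_t}u\|_{L^2 H^{-1}}\le \|u\|_{L^2 H^1_0}<\infty$ by \eqref{apest2}.

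Next I would rewrite \eqref{eq-weaksol1} in the form that identifies the time derivative. The subtlety in the time-dependent setting is that the ``integration by parts in time'' must account for the evolving measure: for $\phi\in C^1_c(M\times(0,T))$ one has $\frac{d}{dt}\int_M u\phi\, dm_t = \int_M (\partial_t u)\phi\, dm_t + \int_M u\phi'\, dm_t - \int_M u\phi R\, dm_t$ in the distributional sense, which is exactly why the term $-\int\!\!\int u\phi R$ appears in \eqref{eq-weaksol1}. Concretely, I would define the distribution $\partial_t u$ on $M\times(0,T)$ by declaring $\la \partial_t u,\phi\ra := -\int_0^T\!\int_M u(\phi' - \phi R)\, dm_t dt$; then \eqref{eq-weaksol1} says precisely $\la\partial_t u,\phi\ra = \int_0^T\!\int_M \la\nabla u,\nabla\phi\ra_{g_t}\, dm_t dt = -\la \Lap_{g_t}u,\phi\ra$ for all such $\phi$. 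Since $C^1_c(M\times(0,T))$ is dense in $L^2 H^1_0$ (using that $M$ is closed, so there is no spatial boundary, and standard mollification in time), and since the right-hand side is a bounded functional on $L^2 H^1_0$ by the previous paragraph, the functional $\phi\mapsto \la\partial_t u,\phi\ra$ extends uniquely to $L^2 H^1_0$, i.e.\ $\partial_t u\in L^2 H^{-1}_0$, and it equals $\Lap_{g_t}u$ as an element of $L^2 H^{-1}_0$.

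I expect the main obstacle to be purely a matter of bookkeeping about which notion of ``weak time derivative'' is meant: one must be careful that $\partial_t u$ is the distributional time derivative of the spacetime function $u$ with respect to the \emph{fixed} background structure used to define distributions on $M\times(0,T)$ (so that $\phi\mapsto -\int\!\!\int u\,\partial_t(\phi\, dm_t)$ is the natural pairing), rather than some metric-dependent object, and that with this convention the extra curvature term in \eqref{eq-weaksol1} is automatically absorbed. Once the definitions are aligned, the proof is a two-line consequence of Proposition \ref{prop_eq_lim}: bound the Laplacian term in $L^2 H^{-1}_0$ via \eqref{apest2}, invoke density of test functions in $L^2 H^1_0$, and conclude $\partial_t u = \Lap_{g_t}u\in L^2 H^{-1}_0$. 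No new estimates beyond those already established are needed.
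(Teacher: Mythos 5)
Your proposal is correct and takes essentially the same route as the paper's one-line proof: integrate \eqref{eq-weaksol1} by parts in time, use $u\in L^2H^1_0$ to see that $\Lap_{g_t}u$ defines a bounded functional on $L^2H^1_0$, and invoke density of test functions. (One small sign slip: with your definition of $\la\partial_t u,\phi\ra$ and \eqref{eq-weaksol1}, the displayed chain should read $\la\partial_t u,\phi\ra = -\int_0^T\!\int_M\la\nabla u,\nabla\phi\ra_{g_t}\,dm_t\,dt = \la\Lap_{g_t}u,\phi\ra$, rather than $+\int\la\nabla u,\nabla\phi\ra = -\la\Lap_{g_t}u,\phi\ra$ as written, but your final stated conclusion $\partial_t u=\Lap_{g_t}u$ is the correct one.)
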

 
\begin{proof}
Since $u\in L^2H^1_0$, this follows from the theorem via integration by parts.
\end{proof}

\begin{proposition}[continuity and initial data attained]\label{prop_continuity}
Any limit $u$ as above, possibly after suitable modification on a null set, is continuous as an $L^2$-valued function, and we have
\begin{equation}
u(0)=u_0.
\end{equation}
\end{proposition}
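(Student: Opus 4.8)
The plan is to use the fact that $u\in L^2H^1_0$ together with $\partial_t u=\Lap_{g_t}u\in L^2H^{-1}_0$ to run the standard Lions--Magenes type argument showing that $u$ has a continuous $L^2$-valued representative, and then identify the initial value with $u_0$ using the approximating functions $u^{h_i}$.

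First I would establish the $C^0L^2$ continuity. The model is the following: for a function $w$ with $w\in L^2H^1_0$ and $\p_t w\in L^2H^{-1}_0$ on a \emph{fixed} Riemannian manifold one has the identity $\tfrac{d}{dt}\tfrac12\int_M w^2\,dm=\langle \p_t w,w\rangle_{H^{-1},H^1}$ in the distributional sense, and this forces $t\mapsto \int_M w(t)^2\,dm$ to be absolutely continuous; then $\Vert w(t)-w(s)\Vert_{L^2}^2\to 0$ as $t\to s$ follows by combining weak continuity (which comes for free from $w\in L^2H^1_0$, $\p_t w\in L^2H^{-1}_0$) with the convergence of the $L^2$-norms. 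In our time-dependent setting one must be a little careful: the correct identity is $\tfrac{d}{dt}\tfrac12\int_M u(t)^2\,dm_t=\langle \p_t u,u\rangle -\tfrac12\int_M u^2 R\,dm_t$, where the extra term accounts for $\p_t dm_t=-R\,dm_t$. Since $R$ is bounded (smoothness and compactness), this extra term is harmless, and one still concludes that $t\mapsto \int_M u(t)^2\,dm_t$ is absolutely continuous, hence that $u$ has a representative in $C^0([0,T];L^2)$. A clean way to make this rigorous is mollification in time of $u$ (after extending suitably past the endpoints), applying the fixed-formula computation to the mollified functions and passing to the limit, using the already-established regularity $u\in L^2H^1_0\cap H^1 H^{-1}_0$.

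Next I would identify $u(0)=u_0$. The functions $u^{h_i}$ satisfy $u^{h_i}(t)=u_0$ for $t\le 0$ (by our extension to negative times) and, for $t\in(0,h_i]$, $u^{h_i}(t)$ is the minimizer whose Euler--Lagrange equation \eqref{eq-variational2} gives $\tfrac{1}{h_i}\int_M(u^{h_i}(t)-u_0)v\,dm_t=-\int_M\langle\nabla u^{h_i}(t),\nabla v\rangle_{g_t}\,dm_t$; testing with $v=u^{h_i}(t)-u_0$ and using the uniform $L^2H^1_0$ bound shows $\Vert u^{h_i}(t)-u_0\Vert_{L^2(g_t)}\to 0$ uniformly for $t\in(0,h_i]$, i.e. $u^{h_i}$ attains the initial data with uniform control near $t=0$. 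More concretely, for any fixed test function $\psi\in C^\infty_c(M)$ and any $\tau>0$, integrating the weak formulation against $\psi$ in space and over $t\in[0,\tau]$ and using the uniform energy bounds shows that $\big|\int_M u^{h_i}(\tau)\psi\,dm_\tau-\int_M u_0\psi\,dm_0\big|\le C(\psi)\sqrt{\tau}$ uniformly in $i$; passing $i\to\infty$ (using the weak$-\ast$ $L^\infty L^\infty$ convergence in the spacetime integral) and then the already-proven $C^0L^2$ continuity of $u$ at $t=0$, letting $\tau\to0$ gives $\int_M u(0)\psi\,dm_0=\int_M u_0\psi\,dm_0$ for all $\psi$, hence $u(0)=u_0$.

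The main obstacle, I expect, is the first part: rigorously justifying the energy identity and the $C^0L^2$ continuity in the time-dependent setting, since one cannot directly quote a black-box theorem for a \emph{fixed} metric measure space --- the metric and measure move. The cleanest fix is to reduce to the fixed-metric case by a time-dependent change of density: write $dm_t=\rho_t\,dm_0$ with $0<c\le\rho_t\le C$ and $\rho_t$ smooth, so that $\int_M u(t)^2\,dm_t=\int_M u(t)^2\rho_t\,dm_0$, and the extra term in differentiating under the integral sign is exactly $\int_M u^2\p_t\rho_t\,dm_0=-\int_M u^2 R\,dm_t$, which is bounded because $\p_t\rho_t/\rho_t=-R$ is bounded. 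With this reduction the standard Lions--Magenes argument (e.g. via time-mollification) applies verbatim on $(M,g_0)$ with the $dm_0$-weighted spaces, and since all these weighted norms are uniformly equivalent to the $dm_t$-norms, one transfers the conclusion back. The second part is then essentially bookkeeping with the uniform estimates from Proposition~\ref{prop_interpol}.
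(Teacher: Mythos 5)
Your continuity argument is essentially the paper's: extend $u$ past the endpoints, mollify in time, apply the energy identity
\[
\frac{d}{dt}\,\frac12\int_M u_\eps^2\,dm_t=\int_M \dot u_\eps\,u_\eps\,dm_t-\frac12\int_M u_\eps^2\,R\,dm_t
\]
to the mollified functions, and deduce that $u_\eps$ is Cauchy in $C^0L^2$ by choosing a good starting time outside a null set and using $u_\eps\to u$ in $L^2H^1_0$ and $\dot u_\eps\to\dot u$ in $L^2H^{-1}$. Your alternative device of reducing to fixed-metric spaces via $dm_t=\rho_t\,dm_0$ is fine but not needed; the direct mollification already works.

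For $u(0)=u_0$ you take a genuinely different route. The paper reruns the Proposition~\ref{prop_eq_lim} computation with test functions $\phi$ that vanish near $t=T$ but \emph{not} near $t=0$; since $u^h(t)\equiv u_0$ for $t\in(-h,0)$, the Riemann sum produces the extra boundary term $-\int_M u_0\,\phi(0)\,dm_0$, and an integration by parts against the already-known weak formulation then yields $\int_M(u(0)-u_0)\,\phi(0)\,dm_0=0$. You instead derive a uniform quantitative bound
\[
\Bigl|\int_M u^{h_i}(\tau)\psi\,dm_\tau-\int_M u_0\,\psi\,dm_0\Bigr|\le C(\psi)\sqrt{\tau}
\]
for fixed $\psi\in C^\infty_c(M)$, and combine it with the $C^0L^2$-continuity of $u$. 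Both work; the paper's version is slightly more economical (it literally reuses the computation already done), while yours makes the rate at which the initial data is attained explicit. Two points to tighten in your version: (i) your preliminary remark that testing the Euler--Lagrange equation with $v=u^{h_i}(t)-u_0$ gives $\|u^{h_i}(t)-u_0\|_{L^2}\to 0$ requires $u_0\in H^1$ (otherwise $v$ is not admissible), which is not assumed --- but you do not actually use this step in the ``more concrete'' argument with fixed $\psi$, so it should simply be cut; (ii) the $\sqrt{\tau}$ bound is pointwise in $\tau$, so you cannot pass $i\to\infty$ in it directly by weak-$\ast$ convergence of $u^{h_i}$. You should first average over $\tau$ in a small window $[\tau_0/2,\tau_0]$, pass to the limit in the time-integrated quantity using weak-$\ast$ convergence, and then invoke the $C^0L^2$-continuity of $u$ to recover the pointwise statement at $\tau_0\to 0$.
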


\begin{proof} After extending $u$ by reflection to $[-\sigma,T+\sigma]$, let $u_\eps=\eta_\eps\ast u$, where $\eta_\eps(t)$ is a mollifying function. Note that $t\mapsto u_\eps(t)$ is a continuous $L^2$-valued function. For any $t_1\leq t_2$ we have
\begin{align}
\norm{u_\eps(t_2)-u_{\delta}(t_2)}_{L^2}^2
&=\norm{u_\eps(t_1)-u_{\delta}(t_1)}_{L^2}^2
+\int_{t_1}^{t_2}\!\!\int_M \left(2(\dot{u}_\eps-\dot{u}_\delta) (u_\eps-u_\delta) - R(u_\eps-u_\delta)^2\right) \, dm_tdt\nonumber\\
&\leq\norm{u_\eps(t_1)-u_{\delta}(t_1)}_{L^2}^2
+\norm{\dot{u}_\eps-\dot{u}_\delta}_{L^2H^{-1}}^2
+\norm{{u}_\eps-{u}_\delta}_{L^2H^{1}_0}^2
+C_0\norm{{u}_\eps-{u}_\delta}_{L^2L^2}^2.
\end{align}
Observe that $u_\eps\to u$ in $L^2H^1_0$ and $\dot{u}_\eps\to \dot{u}$ in $L^2H^{-1}$. Thus, choosing $t_1$ outside a set of measure zero, we infer that
\begin{equation}
\limsup_{\eps,\delta\to 0}\sup_{t\in [0,T]}\norm{u_\eps(t)-u_{\delta}(t)}_{L^2}^2=0.
\end{equation}
This proves continuity.\\
Now, consider a more general test function $\phi$ that does not vanish near $t=0$. We still assume that $\phi=0$ for $t$ near $T$. Arguing as before we compute
\begin{align}
0 &= \int_{-h}^T \int_M \frac{u^h (t+h)- u^h(t)}{h} \phi (t) + \la \nabla u^h(t+h),\nabla \phi(t)  \ra_{g_{t+h}} dm_{t+h} dt \nonumber\\
 &=  \int_0^{T} \int_M u^h(t) \frac{\phi (t-h)}{h}  + \la \nabla u^h(t),\nabla \phi(t-h)  \ra_{g_t} dm_t dt - \int_{-h}^T \int_M u^h(t) \frac{\phi (t)}{h} dm_{t+h} dt \nonumber  \\
&=  \int_0^{T} \int_M u^h(t) \frac{\phi (t-h)- \phi(t)}{h}  + \la \nabla u^h(t),\nabla \phi(t-h)  \ra_{g_t} dm_t dt\nonumber  \\
& \qquad\qquad + \int_0^T\left[ \int_M u^h(t) \frac{\phi (t)}{h} dm_{t} -\int_M u^h(t) \frac{\phi (t)}{h} dm_{t+h}\right] dt  -\int_{-h}^0\int_M \frac{u^h(t) \phi(t) }{h} dm_{t+h} dt \, .
\end{align}
By taking $h_i\to 0$ we obtain
\begin{multline} 0 =  \int_0^T \int_M -u(t)\phi'(t)  + \la \nabla u(t),\nabla \phi(t)  \ra_{g_t} dm_t dt \\
+\int_0^T \int_M u(t) \phi(t)R dm_t dt - \int_M u_0\phi(0) dm_0,
\end{multline}
where we used that $u^h(t)=u_0$ for $t\in (-h,0)$.
Via integration by parts, this can be rewritten as
\bea 0 &=  \int_0^T \int_M u'(t)\phi(t)  + \la \nabla u(t),\nabla \phi(t)  \ra_{g_t} dm_t dt +\int_M (u(0)- u_0)\phi(0)\, dm_0. \eea 
On the other hand, we know that \bea 0 =  \int_0^T \int_M u'(t)\phi(t)  + \la \nabla u(t),\nabla\phi(t)\ra_{g_t} dm_tdt \eea for all test functions $\phi$, even if they do not vanish near $t=0$. Thus, we conclude that $u(0)=u_0$.
\end{proof}

 \bigskip

Combining the above propositions, we can now conclude the proof of our main results:

\begin{proof}[Proof of Theorem \ref{thm_main} and Corollary \ref{cor_main}]
Given our one-parameter family of closed Riemannian manifolds $(M,g_t)_{t\in [0,T]}$, recall that $C_0<\infty$ is so that $dm_{t_2}\leq e^{C_0(t_2-t_1)}dm_{t_1}$ for $t_2\geq t_1$.\\

Let us first deal with the case $u_0\in  L^\infty(M,g_0)$. Recall that we extended the metric and initial condition to negative times by setting $g_t=g_0$ and $u^h(t) = u_0$ for $t<0$. We then defined $u^h(t)$ recursively in time as unique minimizer of the convex functional
\begin{equation}
u\mapsto \int_{M}\left({\vert \nabla u\vert}^2_{g_t}+\frac{1}{h}(u- u^{h}(t-h))^2\right)dm_{t}.
\end{equation}
By Proposition \ref{prop_interpol}, the functions $u^h(t)$ satisfy the uniform energy estimate
\begin{equation}\label{main_pr_energ}
\sup_{t\in[0,T]}\int_M u^h(t)^2 dm_t+ \int_{0}^T \!\! \int_{M} \vert \nabla u^h(t) \vert_{g_t}^2 \, dm_t\,dt \le e^{C_0T} \int_{M}u_0^2 \, dm_0,
\end{equation}
which only depends on $C_0$, as well as the estimates
\begin{equation}
\sup u^h(t) \leq \sup u_0\quad \textrm{ and }\quad \inf u^h(t)\geq \inf u_0.
\end{equation}
Hence, for any sequence $h_i\to 0$, we can pass to a subsequential limit weakly in $L^2 H^1_0$ and weak-$\ast$ in $L^\infty L^\infty$ (and thus in particular also weak-$\ast$ in $L^2 L^\infty$). By Proposition \ref{prop_eq_lim}, Corollary \ref{cor_time_der} and Proposition \ref{prop_continuity}, any such limit $u$ is a weak solution of the heat equation $\partial_t u =\Delta_{g_t} u$ with initial condition $u(0)=u_0$. By standard parabolic estimates $u$ is smooth for $t>0$ and solves the equation in the classical sense. Furthermore, since our approximation scheme is linear, the same estimates hold for the difference between two solutions. In particular, $u$ is unique, and the subsequential convergence entails full convergence.\\

Finally, given any $u_0\in  L^2(M,g_0)$, let $u^{h}(t)$ he function produced by our approximation scheme with initial condition $u_{0}$. We still have the uniform energy estimate \eqref{main_pr_energ}, and we still can pass to a subsequential limit $u$ weakly in $L^2 H^1_0$ and weak-$\ast$ in  $L^2 L^\infty$. To show that $u(t)$ solves the heat equation with initial condition $u_0$, we consider the truncated function
\begin{equation}
u_{0,n}(x):= \left\{\begin{array}{ll}
        -n, & \text{if } u_0(x)\leq -n\\
        u_0(x), & \text{if } -n\leq u_0(x)\leq n\\
        n, & \text{if } u_0(x)\geq n.
        \end{array}\right.
\end{equation}
Let $u^h_n(t)$ be the function produced by our approximation scheme with initial condition $u_{0,n}$. By the above, for any fixed $n$, for $h\to 0$ the functions $u^h_n(t)$ converge to the unique solution $u_n(t)$ of the heat equation with initial condition $u_{0,n}$. Furthermore, since $u_{0,n}\to u_0$ in $L^2(M,g_0)$, we have
\bea
\limsup_{n\to \infty}\left(\sup_{t\in[0,T]}\int_M (u^h_n(t)-u^h(t))^2 dm_t+ \int_{0}^T \!\! \int_{M} \vert \nabla (u^h_n(t)-u^h(t)) \vert_{g_t}^2 \, dm_t\,dt \right)=0\eea
uniformly in $h$. Hence, we conclude that $u(t)$ solves the heat equation with initial condition $u_0$.
\end{proof}

\bigskip

\bibliography{CGHS}

\bibliographystyle{alpha}

\vspace{10mm}

{\sc Beomjun Choi, Department of Mathematics, University of Toronto,  40 St George Street, Toronto, ON M5S 2E4, Canada}\\

{\sc Jianhui Gao, Department of Biostatistics, Dalla Lana School of Public Health, University of Toronto,  155 College Street, Toronto, ON  M5T 3M7, Canada}\\

{\sc Robert Haslhofer, Department of Mathematics, University of Toronto,  40 St George Street, Toronto, ON M5S 2E4, Canada}\\

{\sc Daniel Sigal, Department of Family and Community Medicine, University of Toronto, 500 University Avenue, Toronto, ON M5G 1V7, Canada}\\

\emph{E-mail:} beomjun.choi@utoronto.ca, jianhui.gao@mail.utoronto.ca, roberth@math.toronto.edu, daniel.sigal@mail.utoronto.ca

\end{document}